\numberwithin{equation}{section} 
\begin{document}

 \PageNum{1}
 \Volume{201x}{Sep.}{x}{x}
 \OnlineTime{August 15, 201x}
 \DOI{0000000000000000}
 \EditorNote{Received 7 24, 2016, accepted x x, 201x}

\abovedisplayskip 6pt plus 2pt minus 2pt \belowdisplayskip 6pt
plus 2pt minus 2pt
\def\vsp{\vspace{1mm}}
\def\th#1{\vspace{1mm}\noindent{\bf #1}\quad}
\def\proof{\vspace{1mm}\noindent{\it Proof}\quad}
\def\no{\nonumber}
\newenvironment{prof}[1][Proof]{\noindent\textit{#1}\quad }
{\hfill $\Box$\vspace{0.7mm}}
\def\q{\quad} \def\qq{\qquad}
\allowdisplaybreaks[4]
\newtheorem{thm}{Theorem}[section]
\newtheorem{prop}[thm]{Proposition}
\newtheorem{cor}[thm]{Corollary}
\newtheorem{lem}[thm]{Lemma}
\newtheorem{rem}[thm]{Remark}
\newtheorem{defn}[thm]{Definition}
\newtheorem{que}[thm]{Question}

\newcommand{\A}{{\mathcal A}}
\newcommand{\B}{{\mathcal B}}
\newcommand{\C}{{\mathcal C}}
\newcommand{\D}{{\mathcal D}}
\newcommand{\Z}{{\mathcal Z}}
\newcommand{\E}{{\mathcal E}}
\newcommand{\F}{{\mathcal T}}
\newcommand{\I}{{\mathcal I}}
\newcommand{\M}{{\mathcal M}}
\newcommand\Rep{\operatorname{Rep}}
\newcommand{\Irr}{\operatorname{Irr}}
\newcommand\FPdim{\operatorname{FPdim}}
\newcommand\FPind{\operatorname{FPind}}
\newcommand\vect{\operatorname{Vec}}
\newcommand\SuperV{\operatorname{SuperVec}}
\newcommand\id{\operatorname{id}}
\newcommand\Tr{\operatorname{Tr}}
\newcommand\gr{\operatorname{gr}}
\newcommand\cd{\operatorname{cd}}
\newcommand\ord{\operatorname{ord}}
\newcommand\End{\operatorname{End}}
\newcommand\Alg{\operatorname{Alg}}
\newcommand\Jac{\operatorname{Jac}}
\newcommand\op{\operatorname{op}}
\newcommand\pt{\operatorname{pt}}
\newcommand\Hom{\operatorname{Hom}}
\newcommand\rk{\operatorname{rk}}
\newcommand\Pic{\operatorname{G}}
\newcommand\Fib{\operatorname{\textbf{Fib}}}


\AuthorMark{Dong J. and Zhang L. and Dai L.}                             

\TitleMark{Graded self-dual fusion categories of rank $4$}  

\title{Non-trivially graded self-dual fusion categories of rank $4$        
\footnote{Supported by the Fundamental Research Funds for the Central Universities (Grant No. KYZ201564), the Natural Science Foundation of China (Grant No. 11571173, 11201231) and the Qing Lan Project. }}                  

\author{Jingcheng \uppercase{Dong}$^*$\footnote{*Corresponding author}}             
    {College of Engineering, Nanjing Agricultural University, Nanjing 210031, China\\
    E-mail\,$:$ dongjc@njau.edu.cn }

\author{Liangyun ZHANG}     
    {College of Science, Nanjing Agricultural University, Nanjing 210095, China\\
    E-mail\,$:$ zlyun@njau.edu.cn }

\author{Li DAI}     
    {College of Engineering, Nanjing Agricultural University, Nanjing 210031, China\\
    E-mail\,$:$ daili1980@njau.edu.cn }

\maketitle%

\Abstract{Let $\C$ be a self-dual spherical fusion categories of rank $4$ with non-trivial grading. We complete the classification of Grothendieck ring $K(\C)$ of $\C$; that is, we prove that $K(\C)\cong Fib\otimes\mathbb{Z}[\mathbb{Z}_2]$, where $Fib$ is the Fibonacci fusion ring and $\mathbb{Z}[\mathbb{Z}_2]$ is the group ring on $\mathbb{Z}_2$. In particular, if $\C$ is braided then it is equivalent to $\Fib\boxtimes\vect_{\mathbb{Z}_2}^{\omega}$ as fusion categories, where $\Fib$ is a Fibonacci category and $\vect_{\mathbb{Z}_2}^{\omega}$ is a rank $2$ pointed fusion category.}      

\Keywords{Fusion categories; Universal grading; Small rank; Frobenius-Perron dimension}        

\MRSubClass{18D10, 16T05}      

\section{Introduction}\label{sec1}
Throughout this paper we shall work over an algebraically closed field $k$ of characteristic zero.  A \emph{fusion category} over $k$ is a $k$-linear semisimple rigid tensor category  with finitely many isomorphism classes of simple objects, finite-dimensional spaces of morphisms and such that the unit object \textbf{$1$} is simple. Fusion categories arise from many areas of mathematics and physics, including representation theory of semisimple Hopf algebras and quantum groups \cite{BaKi2001lecture}, vertex operator algebras \cite{2016DongWang} and topological quantum field theory \cite{Turaer1994}.

The systematic work of classifying fusion categories of small rank dates back to Ostrik's work \cite{ostrik2003fusion}. In that paper Ostrik classified all fusion categories of rank $2$. We recall that the \emph{rank} of a fusion category is the number of isomorphism classes of its simple objects. About ten years later, Ostrik completed the classification of (pivotal) fusion categories of rank $3$ \cite{2013ostrikpivotal}. The classification of all fusion categories of rank greater than $3$ seems very difficult at the moment, only some fusion categories with additional structures were classified, see \cite{Bruillard20162364, raey,2016Bruillardrank4,Larson2014184}.

Let $\C$ be a fusion category of rank $4$. Then either exactly two simple objects are self-dual, or all four simple objects are self-dual. In \cite{Larson2014184}, Larson studied fusion categories of rank $4$ with exactly two self-dual simple objects, and gave some partial classification results. But the later case still remains open. In this paper, we study the later case and classify the Grothendieck ring of these fusion categories under the assumption that they admit non-trivial grading.

 Let $\Fib$ be a \emph{Fibonacci category}, $\vect_{\mathbb{Z}_2}^{\omega}$ be a pointed fusion category of rank $2$, where $\omega\in H^3(\mathbb{Z}_2,k^*)=\mathbb{Z}_2$.  Let $\Fib\boxtimes \vect_{\mathbb{Z}_2}^{\omega}$ be the Deligne's tensor product of $\Fib$ and $\vect_{\mathbb{Z}_2}^{\omega}$. Then $\Fib\boxtimes \vect_{\mathbb{Z}_2}^{\omega}$ is a self-dual fusion category of rank $4$.  Here, a fusion category is called self-dual if its all simple objects are self-dual. In addition, $\Fib\boxtimes \vect_{\mathbb{Z}_2}^{\omega}$ has non-trivial universal grading; more precisely, the universal grading group $\mathcal{U}(\Fib\boxtimes \vect_{\mathbb{Z}_2}^{\omega})$ of $\Fib\boxtimes \vect_{\mathbb{Z}_2}^{\omega}$ is $\mathbb{Z}_2$. Our main result is that the Grothendieck ring $K(\Fib\boxtimes \vect_{\mathbb{Z}_2}^{\omega})$ is ``unique" in the sense that the Grothendieck ring of any self-dual spherical fusion category of rank $4$  with non-trivial grading is isomorphic to $K(\Fib\boxtimes \vect_{\mathbb{Z}_2}^{\omega})$. In particular, if the fusion category considered is braided then it is equivalent to $\Fib\boxtimes \vect_{\mathbb{Z}_2}^{\omega}$ as fusion categories.

This paper is organized as follows. In Section \ref{sec2}, we give some basic definitions and results used throughout and recall Ostrik's classification result on rank $2$ fusion categories.

In Section \ref{sec3}, we study self-dual fusion categories $\C$ of rank $4$ which admit non-trivial grading. We determine their universal grading groups, fusion rules and \emph{Frobenius-Perron} (FP) dimensions of their simple objects. We obtain that the Grothendieck ring $K(\C)$ of $\C$ has two possible structures: $Fib\otimes\mathbb{Z}[\mathbb{Z}_2]$ or $K_{12}$, where $Fib$ is the Fibonacci fusion ring, $\mathbb{Z}[\mathbb{Z}_2]$ is the group ring on $\mathbb{Z}_2$, and $K_{12}$ is a new fusion ring. In particular, if the fusion category considered has the Grothendieck ring $K_{12}$ then it can not admit a structure of a braided fusion category.

In Section \ref{sec4}, we assume that the fusion categories considered are spherical. We compute the images of simple objects under the functors $F:\mathcal{Z}(\C)\to \C$ and $I:\C\to \mathcal{Z}(\C)$, where $\C$ is a self-dual fusion category of rank $4$ with non-trivial universal grading, $\mathcal{Z}(\C)$ is the Drinfeld center of $\C$, $F$ is the forgetful functor and $I$ is its right adjoint. We then use these data to prove that the Grothendieck ring of $\C$ is isomorphic to $Fib\otimes\mathbb{Z}[\mathbb{Z}_2]$. Moreover, if $\C$ is braided then it is equivalent to $\Fib\boxtimes\vect_{\mathbb{Z}_2}^{\omega}$ as fusion categories.

\section{Preliminaries and Examples}\label{sec2}
\subsection{Frobenius-Perron dimension}Let $\C$ be a fusion category and let $K(\C)$ be the Grothendieck ring of $\C$. Then the set $\Irr(\C)$ of isomorphism classes of simple objects in $\C$ is the $\mathbb{Z}^+$ basis of $K(\C)$. The FP dimension $\FPdim(X)$ of $X\in\Irr(\C)$ is the largest eigenvalue of the matrix of left multiplication by the class of $X$ in $K(\C)$. By the Frobenius-Perron Theorem $\FPdim(X)$ is a positive real number. Moreover this dimension extends to a ring homomorphism $\FPdim : K(\C) \to \mathbb{R}$ \cite[Theorem 8.6]{etingof2005fusion}. The FP dimension of $\C$ is the number $$\FPdim(\C)=\sum_{X\in\Irr(\C)}\FPdim(X)^2.$$

A simple object $X$ is called invertible if $\FPdim(X)=1$. A fusion category is called pointed if every simple object is invertible. Let $\C_{pt}$ be the fusion subcategory generated by all invertible simple objects of a fusion category $\C$. Then $\C_{pt}$ is the largest pointed fusion subcategory of $\C$.

\subsection{Fusion rules}
Let $X\in \Irr(\C)$ and $Y$ be an arbitrary object of $\C$. The multiplicity of $X$ in $Y$ is defined to be the number $[X,Y]=\dim\Hom_{\C}(X,Y)$.  So we have$$Y= \bigoplus_{X\in \Irr(\C)}[X,Y]X.$$

Let $X,Y,Z\in \Irr(\C)$. Then we have
\begin{equation}\label{multiplicity}
\begin{split}
&[X,Y]=[X^*,Y^*], \\
[X,Y\otimes Z]&=[Y^*,Z\otimes X^*]=[Y,X\otimes Z^*].
\end{split}
\end{equation}

Let $\Pic(\C)$ denote the set of isomorphism classes of invertible simple objects of $\C$. Then $\Pic(\C)$ is a group with multiplication given by tensor product. The group $\Pic(\C)$ acts on the set $\Irr(\C)$ by left tensor multiplication. Let $G[X]$ be the stabilizer of $X\in \Irr(\C)$ under this action of $\Pic(\C)$. If $g \in \Pic(\C)$ and $X,Y\in \Irr(\C)$ then
\begin{equation}\label{grouplikein1}
\begin{split}
[g,X\otimes Y]>0 \Longleftrightarrow [g,X\otimes Y]=1 \Longleftrightarrow Y= X^*\otimes g.
\end{split}
\end{equation}
In particular,
\begin{equation}\label{grouplikein2}
\begin{split}
[g,X\otimes X^{*}]>0 \Longleftrightarrow [g,X\otimes X^{*}]= 1 \Longleftrightarrow g\otimes X= X.
\end{split}
\end{equation}
Thus, for all $X\in \Irr(\C)$, we have a relation
$$X\otimes X^*=\sum_{g\in G[X]}g+\sum_{Y\in \Irr(\C)/G[X]} [Y, X\otimes X^*] Y.$$
In particular, $\textbf{1}\in X\otimes X$ if and only if $X$ is self-dual.


\subsection{Group extension of a fusion category}
Let $G$ be a finite group. A fusion category $\C$ is said to have a $G$-grading if $\C$ has a direct sum of full abelian subcategories $\C=\oplus_{g\in G}\C_g$ such that $(\C_g)^\ast=\C_{g-1}$ and $\C_g\otimes\C_h\subseteq\C_{gh}$ for all $g,h\in G$. If $\C_g\neq 0$ for all $g\in G$ then $\C=\oplus_{g\in G}\C_g$ is called a faithful $G$-grading. If this is the case $\C$ is called a $G$-extension of the trivial component $\C_e$.

By \cite[Proposition 8.20]{etingof2005fusion}, if $\C=\oplus_{g\in G}\C_g$ is a faithful grading then, for all $g,h\in G$, we have
\begin{equation}\label{FPdimgrading}
\begin{split}
\FPdim(\C_g)=\FPdim(\C_h)\,\, \mbox{and}\, \FPdim(\C)=|G| \FPdim(\C_e).
\end{split}
\end{equation}

It is known that every fusion category $\C$ has a canonical faithful grading $\C=\oplus_{g\in \mathcal{U}(\C)}\C_g$ with trivial component $\C_e=\C_{ad}$, where $\C_{ad}$ is the adjoint subcategory of $\C$ generated by simple objects in $X\otimes X^\ast$ for all $X\in \Irr(\C)$. This grading is called the universal grading of $\C$, and $\mathcal{U}(\C)$ is called the universal grading group of $\C$, see \cite{gelaki2008nilpotent}.

\subsection{Braided fusion categories}
A braided fusion category $\C$ is a fusion category admitting a braiding $c$, where the braiding is a family of natural isomorphisms: $c_{X,Y}$:$X\otimes Y\rightarrow Y\otimes X$ satisfying the hexagon axioms for all $X,Y\in\C$, see \cite{kassel1995quantum}.

Let $\D$ be a fusion subcategory of a braided fusion category $\C$. Then the M\"{u}ger centralizer $\D'$ of $\D$ in $\C$ is the fusion subcategory
$$\D'=\{Y\in\C|c_{Y,X}c_{X,Y}=\id_{X\otimes Y}\, \mbox{for all}\, X\in\D\},$$
where $c$ stands for the braiding of $\C$. The M\"{u}ger center $\mathcal{Z}_2(\C)$ of $\C$ is the M\"{u}ger centralizer $\C'$ of $\C$. The fusion category $\C$ is called non-degenerate if its M\"{u}ger center $\mathcal{Z}_2(\C)=\vect$ is trivial.

A braided fusion category is called premodular if it admits a spherical structure. For the definition of spherical structure of a fusion category, the reader is directed to \cite{etingof2005fusion}. A modular category is a non-degenerate premodular category.

\subsection{Deligne products of rank $2$ fusion categories}In this subsection, we give some examples of self-dual fusion categories of rank $4$. First we recall the Deligne's tensor product from \cite[Proposition 5.13]{deligne1990categories}. Let $\C$ and $\D$ be two fusion categories. The Deligne's tensor product $\C\boxtimes \D$ is a fusion category with simple objects $X\boxtimes Y$, where $X\in \Irr(\C)$, $Y\in \Irr(\D)$.  Let $X_1,X_2\in \Irr(\C)$ and  $Y_1,Y_2\in \Irr(\D)$. Then
$$(X_1\boxtimes Y_1)\otimes (X_2\boxtimes Y_2):=(X_1\otimes X_2)\boxtimes (Y_1\otimes Y_2).$$
The morphisms in $\C\boxtimes \D$ are defined in an obvious way.

\medbreak
Let $\C$ be a fusion category of rank $2$. Write $\Irr(\C)=\{\textbf{1},X\}$. Then the fusion rules of $\C$ are determined by a non-negative integer $n$:
$$X\otimes X= \textbf{1}\oplus nX.$$

Let $K_n$ denote the Grothendieck ring corresponding to the number $n$. The main result of \cite{ostrik2003fusion} shows that only $K_0$ and $K_1$ are categorifiable. Specifically, there exists a pointed fusion category $\vect_{\mathbb{Z}_2}^{\omega}$ such that $K(\vect_{\mathbb{Z}_2}^{\omega})=K_0$, where $\omega\in H^3(\mathbb{Z}_2,k^*)=\mathbb{Z}_2$, and there exists a Fibonacci category $\Fib$ such that $K(\Fib)=K_1$. A Fibonacci category is a rank $2$  modular category of FP dimension $\frac{5+\sqrt{5}}{2}$. It is known that Fibonacci categories fall into $2$ equivalence classes and both of them can be realized using the quantum group $U_q(sl_2)$ for $q=\sqrt[10]{1}$, see \cite{ostrik2003fusion}.

The fusion ring $K_1$ is called the Fibonacci (Yang-Lee) fusion ring and we denote it by $Fib$. It is obvious that $K_0$ is the group ring $\mathbb{Z}(\mathbb{Z}_2)$ over $\mathbb{Z}_2$.

\medbreak

Let $\A$ and $\B$ be two fusion categories of rank $2$ with $\Irr(\A)=\{\textbf{1}_{\A},A\}$ and $\Irr(\B)=\{\textbf{1}_{\B},B\}$. Then $\A\boxtimes \B$ is a self-dual fusion category with $$\Irr(\A\boxtimes \B)=\{\textbf{1}_{\A}\boxtimes\textbf{1}_{\B}, A\boxtimes\textbf{1}_{\B}, \textbf{1}_{\A}\boxtimes B, A\boxtimes B \}.$$

Write $$\textbf{1}=\textbf{1}_{\A}\boxtimes\textbf{1}_{\B},\,X=A\boxtimes\textbf{1}_{\B},\,Y=\textbf{1}_{\A}\boxtimes B,\, Z=A\boxtimes B\,\mbox{\,and\,\,} \C=\A\boxtimes \B.$$ Then $\C$ fits into three classes of fusion categories:

(1)\, $\A$ and $\B$ are both pointed. In this case, $\C$ has the fusion rules:
$$X\otimes X= Y\otimes Y= Z\otimes Z=\textbf{1}.$$
The FP dimensions of $X,Y,Z$ are all equal to $1$. In this case, $K(\C)$ is the group ring $\mathbb{Z}(\mathbb{Z}_2)\otimes\mathbb{Z}(\mathbb{Z}_2)\cong\mathbb{Z}(\mathbb{Z}_2\times\mathbb{Z}_2)$, where $\otimes$ are usual tensor product (over $\mathbb{Z}$) of rings.

(2)\, $\A$ is not pointed and $\B$ is pointed.  In this case, $\C$ has the fusion rules:
\begin{align*}
&X\otimes X= \textbf{1}\oplus X, &X\otimes Y&= Z, &X\otimes Z&= Y\oplus Z,\\
&Y\otimes Y=\textbf{1}, &Y\otimes Z&= X, &Z\otimes Z&= \textbf{1}\oplus X.
\end{align*}
The FP dimensions of $X,Y,Z$ are:
$$\FPdim(Y)=1,\FPdim(X)=\FPdim(Z)=\frac{1+\sqrt{5}}{2}.$$
In this case, $K(\C)$ is the ring $Fib\otimes \mathbb{Z}(\mathbb{Z}_2)$.

(3)\, $\A$ and $\B$ are not pointed.  In this case, $\C$ has the fusion rules:
\begin{align*}
&X\otimes X= \textbf{1}\oplus X,   &X\otimes Y&= Z, &X\otimes Z&= Y\oplus Z,\\
&Y\otimes Y= \textbf{1}\oplus Y,   &Y\otimes Z&= X\oplus Z, & Z\otimes Z&= \textbf{1}\oplus X\oplus Y\oplus Z.
\end{align*}
The FP dimensions of $X,Y,Z$ are:
$$\FPdim(X)=\FPdim(Y)=\frac{1+\sqrt{5}}{2},\FPdim(Z)=\frac{3+\sqrt{5}}{2}.$$
In this case, $K(\C)$ is the ring $Fib\otimes Fib$.

\begin{prop}\label{prop2}
Let $\C$ be the Deligne product of $2$ fusion categories of rank $2$. Then $\C$ admits a structure of a braided category.
\end{prop}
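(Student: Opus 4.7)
The plan is a short two-step argument. First I would observe that every fusion category of rank $2$ admits a braiding, and then I would note that the Deligne tensor product of two braided fusion categories carries a canonical braiding.

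For the first step, recall from Ostrik's classification (quoted in the paper) that a rank $2$ fusion category is either equivalent to a pointed category $\vect_{\mathbb{Z}_2}^{\omega}$ with $\omega\in H^3(\mathbb{Z}_2,k^*)$, or to a Fibonacci category $\Fib$. A Fibonacci category is modular by definition, in particular braided. For the pointed case, braidings on $\vect_{\mathbb{Z}_2}^{\omega}$ are classified by abelian $3$-cocycles $(\omega,c)$ extending $\omega$, and such extensions exist (for example, because $H^3_{ab}(\mathbb{Z}_2,k^*)\cong\mathbb{Z}_4$ surjects onto $H^3(\mathbb{Z}_2,k^*)=\mathbb{Z}_2$ via the forgetful map). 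Hence each factor $\A$ and $\B$ carries a braiding $c^{\A}$ and $c^{\B}$.

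For the second step, I would define a braiding on $\C=\A\boxtimes\B$ on simple objects by
\begin{equation*}
c_{X_1\boxtimes Y_1,\,X_2\boxtimes Y_2}:=c^{\A}_{X_1,X_2}\boxtimes c^{\B}_{Y_1,Y_2},
\end{equation*}
and extend by linearity and naturality to all objects. Since the tensor product in $\A\boxtimes\B$ is defined coordinate-wise, the hexagon axioms for $c$ reduce to the hexagon axioms for $c^{\A}$ and $c^{\B}$ separately, and the naturality of $c$ reduces to the naturality of the two given braidings. This makes $\C$ a braided fusion category.

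There is no real obstacle here; the only minor subtlety is checking that a braiding always exists on $\vect_{\mathbb{Z}_2}^{\omega}$ for both choices of $\omega$, which one handles by the Eilenberg--MacLane computation of abelian cohomology of $\mathbb{Z}_2$ mentioned above. Once that is in hand, the rest is formal from the definition of Deligne's tensor product recalled in the excerpt.
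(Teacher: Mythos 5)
Your proof follows essentially the same route as the paper's: braid each rank-$2$ factor separately and then observe that the Deligne product of two braided fusion categories is braided. If anything you are more careful than the paper on the pointed case, since the paper only exhibits a braiding on $\Rep(\mathbb{Z}_2)$ (i.e.\ trivial $\omega$), whereas your appeal to the surjection $H^3_{ab}(\mathbb{Z}_2,k^*)\cong\mathbb{Z}_4\to H^3(\mathbb{Z}_2,k^*)=\mathbb{Z}_2$ also covers the case of nontrivial associator.
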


\begin{proof}
It is obvious that a pointed fusion category of rank $2$ admits a structure of a braided category. For example, the category $\Rep(\mathbb{Z}_2)$ of finite-dimensional representations of $\mathbb{Z}_2$ is braided with the standard braiding. On the other hand, a non-pointed fusion category of rank $2$ is always braided (modular) by \cite[Corollary 2.3]{ostrik2003fusion}. Hence $\C$, being Deligne product of two braided fusion categories, admits a structure of a braided category.
\end{proof}

%
%

\section{Non-trivially graded self-dual fusion category of rank $4$}\label{sec3}
In this section, we will determine all possibilities for the Grothendieck ring of a self-dual fusion category of rank 4 which has non-trivial universal grading.

\begin{lem}\label{lem2}
Let $\C$ be a self-dual fusion category. Then the universal grading group $\mathcal{U}(\C)$ is an elementary Abelian $2$-group.
\end{lem}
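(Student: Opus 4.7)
The plan is to exploit the compatibility of the grading with duality together with the self-duality assumption. Recall that a $G$-grading $\C=\oplus_{g\in G}\C_g$ satisfies $(\C_g)^*=\C_{g^{-1}}$, and the universal grading is faithful by definition, so every component $\C_g$ is nonzero.

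First I would fix an arbitrary element $g\in \mathcal{U}(\C)$ and, using faithfulness, pick a simple object $X\in\C_g$. Since $\C$ is self-dual, $X\cong X^*$, and because duality sends $\C_g$ to $\C_{g^{-1}}$, the simple object $X^*$ lies in $\C_{g^{-1}}$. Two distinct homogeneous components intersect only in the zero object (the grading is a direct sum of full abelian subcategories), so $X\in\C_g\cap \C_{g^{-1}}$ forces $g=g^{-1}$, i.e.\ $g^2=e$. Thus every element of $\mathcal{U}(\C)$ has order dividing $2$.

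Finally, I would invoke the elementary group-theoretic fact that a group in which every element squares to the identity is abelian: for $g,h\in \mathcal{U}(\C)$ one has $(gh)^2=e$, hence $gh=(gh)^{-1}=h^{-1}g^{-1}=hg$. Combined with the previous step, this shows $\mathcal{U}(\C)$ is an elementary abelian $2$-group.

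There is essentially no main obstacle — the argument is a direct combination of the defining properties of the universal grading and self-duality. The only minor point requiring care is making sure each $g\in \mathcal{U}(\C)$ is actually represented by some simple object, which is exactly the content of faithfulness of the universal grading.
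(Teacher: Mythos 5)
Your proof is correct and takes essentially the same approach as the paper: both arguments show that a self-dual simple object in $\C_g$ forces $g^2=e$, the only (cosmetic) difference being that you use the grading axiom $(\C_g)^*=\C_{g^{-1}}$ directly, while the paper observes that $\textbf{1}\subseteq X\otimes X^*=X\otimes X\in\C_{g^2}$. Your explicit attention to faithfulness of the universal grading and to the fact that exponent $2$ implies abelian is a welcome bit of extra care.
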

\begin{proof}
Let $e$ be the unit element of $\mathcal{U}(\C)$. It is clear that the unit object $\textbf{1}$ is contained in $\C_e=\C_{ad}$. Let $X$ be a simple object in some component $\C_g$. Since $X$ is self-dual, we have $[\textbf{1},X\otimes X]=[\textbf{1},X\otimes X^*]=1$. On the other hand, $X\otimes X=X\otimes X^*$ is contained in $\C_{g^2}$. This means that $\textbf{1}$ is also contained in $\C_{g^2}$. Hence we get $\C_{g^2}=\C_e$, which means that $g^2=e$. This proves that the order of any element of $\mathcal{U}(\C)$ is $1$ or $2$. Hence $\mathcal{U}(\C)$ is an elementary Abelian $2$-group.
\end{proof}

\begin{lem}\label{lem3}
Let $\C$ be a self-dual fusion category. Then $G(\C)$ is an elementary Abelian $2$-group.
\end{lem}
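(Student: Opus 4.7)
The plan is to argue directly that every invertible object in $\C$ is an involution, from which both the abelian property and the $2$-torsion property follow.

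First, I would observe that by hypothesis every simple object of $\C$ is self-dual. In particular, every invertible simple object $g \in \Pic(\C)$ satisfies $g^* \cong g$. On the other hand, since $g$ is invertible, $g \otimes g^* \cong \mathbf{1}$, so $g^{-1} \cong g^*$. Combining these, $g^2 \cong g \otimes g^* \cong \mathbf{1}$, so the class of $g$ in $\Pic(\C)$ has order $1$ or $2$.

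Second, I would use the elementary group-theoretic fact that a group in which every element is an involution is necessarily abelian: for $g,h \in \Pic(\C)$ one has $gh = (gh)^{-1} = h^{-1}g^{-1} = hg$. Hence $\Pic(\C)$ is an abelian group of exponent at most $2$, which by definition makes it an elementary abelian $2$-group.

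There is no real obstacle here; the argument is entirely parallel to the proof of Lemma \ref{lem2}, only carried out at the level of invertible objects rather than at the level of graded components. The one thing worth being careful about is the identification $g^{-1} \cong g^*$ for invertible $g$, which is immediate from $g \otimes g^* \cong \mathbf{1}$ together with the uniqueness (up to isomorphism) of the inverse in the monoid of isomorphism classes of objects.
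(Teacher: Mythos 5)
Your proof is correct and follows essentially the same route as the paper: identify $g^{-1}$ with $g^*$ via invertibility, use self-duality to get $g^{\otimes 2}\cong \mathbf{1}$, and conclude. The only difference is that you spell out the standard fact that exponent-$2$ groups are abelian, which the paper leaves implicit.
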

\begin{proof}
Let $\textbf{1}\neq X$ be an invertible object of $\C$. Since $X$ is self-dual, we have $X\otimes X= X\otimes X^*=\textbf{1}$. So every non-trivial element of $G(\C)$ has order $1$ or $2$.
\end{proof}

\medbreak

Let $\C$ be a self-dual fusion category of rank $4$. Lemma \ref{lem2} shows that the order of $\mathcal{U}(\C)$ is $1$, $2$ or $4$. In particular, if $|\mathcal{U}(\C)|=4$ then $\C$ is pointed. Since pointed fusion categories have been classified in \cite{Ostrik2003}, we only consider non-pointed fusion category in the rest of our paper.

\begin{thm}\label{thm5}
Let $\C$ be a self-dual fusion category of rank $4$. If $\C$ has non-trivial universal grading then the universal grading group $\mathcal{U}(\C)$ has order $2$ and $\C$ has the following fusion rules.

(1) $\Irr(\C_0)=\{\textbf{1},X\}$, $\Irr(\C_1)=\{Y,Z\}$, and $\C$ has the fusion rules:
\begin{align*}
&X\otimes X= \textbf{1}\oplus X, &X\otimes Y&= Z, &X\otimes Z&= Y\oplus Z,\\
&Y\otimes Y=\textbf{1}, &Y\otimes Z&= X, &Z\otimes Z&= \textbf{1}\oplus X,
\end{align*}
where $\FPdim(X)=\FPdim(Z)=\frac{1+\sqrt{5}}{2}$, $\FPdim(Y)=1$. In this case $K(\C)\cong Fib\otimes \mathbb{Z}(\mathbb{Z}_2)$.

(2) $\Irr(\C_0)=\{\textbf{1},X,Y\}$, $\Irr(\C_1)=\{Z\}$, and $\C$ has the fusion rules:
\begin{align*}
&X\otimes X=\textbf{1},&X\otimes Y&= Y,&X\otimes Z&= Z,\\
&Y\otimes Y= \textbf{1}\oplus X\oplus Y,&Y\otimes Z&= 2Z,&Z\otimes Z&= \textbf{1}\oplus X\oplus 2Y,
\end{align*}
where $\FPdim(X)=1$, $\FPdim(Y)=2,\FPdim(Z)=\sqrt{6}$. In this case we denote $K(\C)$ by $K_{12}$.
\end{thm}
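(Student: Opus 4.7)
The plan is to first pin down the universal grading group $\mathcal{U}(\C)$, then decompose $\C = \C_0 \oplus \C_1$ and exhaust the possible rank splits, and finally, for each split, determine the fusion rules using self-duality, Frobenius reciprocity (\ref{multiplicity}), and the FP-dimension equality (\ref{FPdimgrading}).

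By Lemma \ref{lem2}, $\mathcal{U}(\C)$ is an elementary abelian $2$-group, so $|\mathcal{U}(\C)|\in\{1,2,4\}$; non-triviality rules out $1$. If $|\mathcal{U}(\C)| = 4$ then the four graded components each contain exactly one of the four simples of $\C$; in particular $\C_e = \{\textbf{1}\}$ has FP-dimension $1$, so by (\ref{FPdimgrading}) every component has FP-dimension $1$, forcing $\C$ to be pointed, contrary to the standing assumption. Hence $|\mathcal{U}(\C)| = 2$. Writing $\C = \C_0 \oplus \C_1$ with $r_i$ the rank of $\C_i$, we have $r_0 + r_1 = 4$, $r_1 \geq 1$ by faithfulness, and $\FPdim(\C_0) = \FPdim(\C_1)$. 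The split $(r_0,r_1) = (1,3)$ would give $\FPdim(\C_0) = 1$ against three simples in $\C_1$ whose squared FP-dimensions must sum to $1$, which is impossible. So $(r_0, r_1) \in \{(2,2), (3,1)\}$. Because every element of $\mathbb{Z}_2$ is self-inverse, each $\C_g$ is closed under duality, and so every simple is self-dual within its own component.

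In the case $(2,2)$, $\C_0$ is a self-dual rank-$2$ fusion category, so $K(\C_0)\in\{K_0, Fib\}$. If $K(\C_0) = K_0$ then $\FPdim(\C_1) = 2$, forcing both simples of $\C_1$ to be invertible and $\C$ pointed of rank $4$, whence $|\mathcal{U}(\C)| = 4$, a contradiction. So $K(\C_0) = Fib$, $\Irr(\C_0) = \{\textbf{1}, X\}$, $X\otimes X = \textbf{1} + X$, $\FPdim(X) = \varphi := (1+\sqrt{5})/2$. Writing $\Irr(\C_1) = \{Y, Z\}$, self-duality forces $Y\otimes Y = \textbf{1} + bX$ and $Z\otimes Z = \textbf{1} + cX$ with $b,c\geq 0$, and $\FPdim(Y)^2 + \FPdim(Z)^2 = 2 + \varphi$ gives $b + c = 1$. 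So, up to relabeling, $Y$ is invertible and $\FPdim(Z) = \varphi$. The remaining products $X \otimes Y$, $X \otimes Z$, $Y \otimes Z$ lie in the appropriate graded components and are pinned down by matching FP-dimensions against the correct bases, recovering Case~(1).

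In the case $(3,1)$, let $\Irr(\C_0) = \{\textbf{1}, X, Y\}$ and $\Irr(\C_1) = \{Z\}$. Each $W \in \Irr(\C_0)$ satisfies $W \otimes Z = \FPdim(W)\cdot Z$, so $d_X := \FPdim(X)$ and $d_Y := \FPdim(Y)$ are positive integers, and (\ref{multiplicity}) yields $Z\otimes Z = \textbf{1} + d_X X + d_Y Y$, whence $\FPdim(Z)^2 = 1 + d_X^2 + d_Y^2$. The crux is ruling out $d_X, d_Y \geq 2$: writing $X\otimes X = \textbf{1} + aX + bY$, $Y\otimes Y = \textbf{1} + cX + eY$, and (by Frobenius reciprocity) $X\otimes Y = bX + cY$, the FP-dimension equations
\[
d_X^2 = 1 + ad_X + bd_Y, \quad d_Y^2 = 1 + cd_X + ed_Y, \quad d_X d_Y = bd_X + cd_Y,
\]
together with a divisibility analysis modulo $\gcd(d_X, d_Y)$ and the bounds $bd_Y \leq d_X^2 - 1$, $cd_X \leq d_Y^2 - 1$, force $b = 0$ or $c = 0$, which respectively collapse the first or second equation to $d_X = 1$ or $d_Y = 1$. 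Without loss of generality $d_X = 1$, so $X$ is invertible with $X\otimes Y = Y$; writing $Y\otimes Y = \textbf{1} + X + \beta Y$ yields $\beta d_Y = d_Y^2 - 2$, hence $d_Y \mid 2$. Lemma \ref{lem3} rules out $d_Y = 1$ (else $\C_0$ would be pointed of order $3$), so $d_Y = 2$ and $\beta = 1$; the remaining products are then determined by Frobenius reciprocity, matching Case~(2). The main obstacle is the divisibility/positivity analysis that excludes $d_X, d_Y \geq 2$ in the $(3,1)$ case; the rest is routine bookkeeping.
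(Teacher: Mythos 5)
Your proposal is correct, and its overall skeleton coincides with the paper's: reduce to $|\mathcal{U}(\C)|=2$ via Lemma \ref{lem2}, split into the rank patterns $(2,2)$ and $(3,1)$, and pin down the fusion rules by combining Frobenius reciprocity with $\FPdim(\C_0)=\FPdim(\C_1)$. In the $(2,2)$ case your route is the same as the paper's up to packaging (the paper shows directly that $X$ cannot be invertible, then invokes Ostrik's $X\otimes X=\textbf{1}\oplus X$; you quote Ostrik's dichotomy $K(\C_0)\in\{K_0,Fib\}$ and discard $K_0$ — same content). The genuine divergence is in the $(3,1)$ case: the paper imports the statement that an integral rank-$3$ fusion category has non-trivial invertibles from Dong--Dai, whereas you derive it from scratch via the Diophantine system $d_X^2=1+ad_X+bd_Y$, $d_Y^2=1+cd_X+ed_Y$, $d_Xd_Y=bd_X+cd_Y$. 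Your sketch does close: the first equation gives $bd_Y\equiv-1\pmod{d_X}$, which forces $\gcd(d_X,d_Y)=1$; the third then gives $d_X\mid c$ and $d_Y\mid b$, so writing $b=b'd_Y$, $c=c'd_X$ yields $b'+c'=1$, hence $b=0$ or $c=0$ and thus $d_X=1$ or $d_Y=1$ (the auxiliary bounds you mention are not even needed). This buys a self-contained, purely fusion-ring-level proof at the cost of a page of arithmetic, while the paper's citation is shorter but leans on an external categorical result. The only loose ends are cosmetic: in case $(2,2)$ you should spell out that $Y\otimes Z$ and $X\otimes Y$ are simple because $Y$ is invertible (whence $Y\otimes Z=X$, $X\otimes Y=Z$) and compute $X\otimes Z=Y\oplus Z$ from $[Y,X\otimes Z]=[X,Y\otimes Z]=1$ and $[Z,X\otimes Z]=[X,Z\otimes Z]=1$, rather than appealing only to dimension matching.
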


\begin{proof}
Since we have assumed that $\C$ is not pointed and has non-trivial universal grading, the order of $\mathcal{U}(\C)$ is 2. Let $\C=\C_0\oplus \C_1$ be the corresponding universal grading. The ranks of $\C_0$ and $\C_1$ have two possibilities:
$$\mbox{(1)\,} |\Irr(\C_0)|=|\Irr(\C_1)|=2;\quad \mbox{(2)\,} |\Irr(\C_0)|=3,|\Irr(\C_1)|=1.$$

Case (1): Let $\Irr(\C_0)=\{1,X\},\Irr(\C_1)=\{Y,Z\}$.

Claim: $X$ is not invertible.

Proof:\quad By Lemma \ref{lem3}, the order of $G(\C)$ is $1,2$ or $4$. Since we assume that $\C$ is not pointed, the order of $G(\C)$ can not be $4$. So if $X$ is invertible then $\C_0=\C_{pt}$. This implies that $Y$ and $Z$ are non-invertible. So $\FPdim(\C_1)>\FPdim(\C_0)=2$, which contradicts equation \ref{FPdimgrading}.

Now $\C_0$ is a non-pointed fusion category with $2$ simple objects $1,X$. The fusion rules is $X\otimes X=1\oplus X$ by \cite[Theorem 2.1]{ostrik2003fusion}. Counting FP dimensions on both sides (notice that $\FPdim$ is a ring homomorphism), we have $\FPdim(X)=\frac{1+\sqrt{5}}{2}$.

Since $Y\otimes Y, Z\otimes Z, Y\otimes Z\in \C_0$, we have equations $(a,b,c\in\mathbb{Z}^+)$:
$$Y\otimes Y=\textbf{1}\oplus aX,\quad Z\otimes Z=\textbf{1}\oplus bX,\quad Y\otimes Z= cX.$$

We may reorder $Y$ and $Z$ such that $\FPdim(Y)\leq\FPdim(Z)$. This implies that $a\leq b$.

From $[X,Y\otimes Y]=[Y,X\otimes Y]=a$ and $X\otimes Y\in \C_1$, we have $X\otimes Y= aY\oplus dZ,d\in\mathbb{Z}^+$.

From $[X,Z\otimes Z]=[Z,X\otimes Z]=b, [Z,X\otimes Y])=[Z,Y\otimes X]=[Y,Z\otimes X]=d$ and $X\otimes Z\in \C_1$, we have $X\otimes Z= dY\oplus bZ$.

From $\FPdim(\C_0)=\FPdim(\C_1)$, we have $1+\FPdim(X)^2=\FPdim(Y)^2+\FPdim(Z)^2=2+(a+b)\FPdim(X)$. This equation and the fact $\FPdim(X)=\frac{1+\sqrt{5}}{2}$ imply that $a+b=1$. Since $a\leq b$, we have $a=0$ and $b=1$. Hence $Y$ is invertible and $Z\otimes Z=\textbf{1}\oplus X$. Furthermore, $Y$ being invertible implies that $X\otimes Y$ and $Y\otimes Z$ are simple objects, which means $c=d=1$, and so $Y\otimes Z= X, X\otimes Y= Z$.
This further implies that $\FPdim(Z)=\FPdim(X)=\frac{1+\sqrt{5}}{2}$. Since we have gotten $b=d=1$, we finally have $X\otimes Z= Y\oplus Z$. The fact $K(\C)\cong Fib\otimes \mathbb{Z}(\mathbb{Z}_2)$ is easy.

Case(2): Let $\Irr(\C_0)=\{\textbf{1},X,Y\},\Irr(\C_1)=\{Z\}$. We may reorder $X$ and $Y$ such that $\FPdim(X)\leq \FPdim(Y)$. Since $\FPdim(\C_0)=1+\FPdim(X)^2+\FPdim(Y)^2=\FPdim(\C_1)=\FPdim(Z)^2$, we know that $\FPdim(Z)$ is greater than $\FPdim(X)$ and $\FPdim(Y)$.

Since $X\otimes Z, Y\otimes Z\in \C_1$, we have equations $(a,b\in\mathbb{Z}^+)$:
$$X\otimes Z=aZ,\quad Y\otimes Z=bZ.$$

Counting dimensions on both sides, we get $\FPdim(X)=a,\FPdim(Y)=b$. This implies $X$ and $Y$ are integral simple objects. So $\C_0$ is an integral fusion category of rank $3$. By \cite[Lemma 2.1 and Proposition 2.2]{dong2014existence}, $G(\C_0)$ is not trivial. So $X$ must be invertible. In addition, $Y$ is not invertible by Lemma \ref{lem3}. So we have $X\otimes X=\textbf{1}, X\otimes Z= Z$. Since $Y$ is the unique non-invertible simple object in the fusion category $\C_0$, so we have $X\otimes Y= Y$.

From $[Y,X\otimes Y]=[X,Y\otimes Y]=1$, we may write $(c\in\mathbb{Z}^+)$:$Y\otimes Y=\textbf{1}\oplus X\oplus cY$. Recall that $\FPdim(Y)=b$. So we have $b^2-bc-2=0$. The fact that $b$ and $c$ are both non-negative integers show that $b=2,c=1$. Since every component of the universal grading has the same FP dimension, we have that $\FPdim(Z)=\sqrt{1+\FPdim(X)^2+\FPdim(Y)^2}=\sqrt{6}$. This completes the proof.
\end{proof}

\medbreak
Let $X$ be an object of a fusion category $\C$. We use $\langle X\rangle_{\C}$ to denote the fusion subcategory generated by $X$.
\begin{cor}\label{fusionrule2}
Let $\C$ be a self-dual fusion category of rank $4$. If $\C$ has the Grothendieck ring $K_{12}$ then it can not admit a structure of braided fusion category.
\end{cor}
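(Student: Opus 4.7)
The plan is to suppose for contradiction that $\C$ admits a braided structure and then exhaust the possibilities for the M\"uger center $\mathcal{Z}_2(\C)$. First I observe that the fusion subcategories of $\C$ form the chain
$$\vect\subsetneq\C_{pt}=\langle X\rangle\subsetneq\C_{ad}=\langle Y\rangle\subsetneq\C$$
of FP dimensions $1,2,6,12$, since any other subset of the simples listed in Theorem~\ref{thm5}(2) fails to be closed under tensor product. Because $\mathcal{Z}_2(\C)$ is itself a fusion subcategory, it must be one of these four, and I will derive a contradiction in each case; note in particular that $\C_{ad}$ has Grothendieck ring $K(\Rep(S_3))$.

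The two extreme cases are short. If $\mathcal{Z}_2(\C)=\C$ then $\C$ is symmetric, hence super-Tannakian by Deligne's theorem, so every simple FP dimension is integral---contradicting $\FPdim(Z)=\sqrt{6}$. If $\mathcal{Z}_2(\C)=\C_{pt}$, the M\"uger-centralizer identity $\FPdim(\D)\FPdim(\D')=\FPdim(\C)\FPdim(\D\cap\mathcal{Z}_2(\C))$ applied to $\D=\C_{ad}$ (using $\C_{pt}\subseteq\C_{ad}$) yields $\FPdim((\C_{ad})')=12\cdot 2/6=4$, but no fusion subcategory of $\C$ has FP dimension $4$.

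For $\mathcal{Z}_2(\C)=\C_{ad}$ I use a de-equivariantization. Here $\C_{ad}$ is symmetric with Grothendieck ring $K(\Rep(S_3))$, and since $S_3$ has trivial center and trivial Schur multiplier, Deligne's theorem forces $\C_{ad}\cong\Rep(S_3)$. De-equivariantizing $\C$ by this Tannakian subcategory produces a modular category $\widetilde\C=\C_{S_3}$ of FP dimension $12/6=2$, which is pointed of rank $2$ by the rank-$2$ classification of \cite{ostrik2003fusion}. Since no auto-equivalence of $\widetilde\C$ permutes its two simples and $H^2(S_3,k^\ast)=0$, re-equivariantizing gives $|\Irr(\widetilde\C^{S_3})|=2\cdot|\Irr(S_3)|=6$, whereas $\widetilde\C^{S_3}\cong\C$ has only $4$ simples.

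The main obstacle is the final case $\mathcal{Z}_2(\C)=\vect$, i.e.\ $\C$ modular and thus ribbon. For any simple $V$ stabilized by $X$ the double braiding acts on $X\otimes V$ by the scalar $c_{X,V}c_{V,X}=\theta_V/(\theta_X\theta_V)\id=\theta_X^{-1}\id$. Modularity provides the identification $\mathcal{U}(\C)\cong\widehat{G(\C)}$, with $V\in\C_h$ iff $c_{X,V}c_{V,X}=h(X)\id$. Applied to $Y\in\C_0$ this forces $\theta_X^{-1}=1$, while applied to $Z\in\C_1$ it forces $\theta_X^{-1}=-1$; that contradiction finishes the proof.
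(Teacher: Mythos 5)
Your proof is correct, and in the two substantive cases it takes a genuinely different route from the paper's. For the non-degenerate case the paper first rules out modularity by invoking the classification of weakly integral modular categories of dimension $12$ in \cite{Bruillard20162364} (a metaplectic category would have two simple objects of dimension $\sqrt{3}$ and one of dimension $2$, incompatible with $K_{12}$), whereas you argue directly: the balancing axiom gives $c_{X,V}c_{V,X}=\theta_X^{-1}\id$ whenever $X\otimes V\cong V$, and the Gelaki--Nikshych identification $\mathcal{U}(\C)\cong\widehat{G(\C)}$ for modular categories \cite{gelaki2008nilpotent} then yields $\theta_X=1$ from $Y\in\C_0$ and $\theta_X=-1$ from $Z\in\C_1$ --- more elementary and self-contained than the external classification. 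In the degenerate case the paper shows $\mathcal{Z}_2(\C)$ is Tannakian (via M\"uger's lemma \cite{muger2000galois}, since $X\otimes Y=Y$ rules out a fermion), de-equivariantizes to a pointed modular category \cite{2016DongIntegral}, and concludes $\C$ is group-theoretical \cite{naidu2009fusion} hence integral, contradicting $\FPdim(Z)=\sqrt{6}$; you instead split into $\mathcal{Z}_2(\C)=\C_{pt}$, killed by the centralizer dimension identity $\FPdim(\D)\FPdim(\D')=\FPdim(\C)\FPdim(\D\cap\mathcal{Z}_2(\C))$ (no subcategory has dimension $4$ --- do cite \cite[Theorem 3.14]{drinfeld2010braided}, which is exactly this identity for possibly degenerate braided categories), and $\mathcal{Z}_2(\C)=\C_{ad}\cong\Rep(S_3)$, killed by the orbit--stabilizer count of simples in the equivariantization ($2\cdot 3=6\neq 4$, using $H^2(S_3,k^*)=0$) rather than by group-theoreticity. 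Two minor points. First, your modular-case computation uses a twist $\theta$, which non-degeneracy alone does not supply; here this is harmless because $\FPdim(\C)=12$ makes $\C$ weakly integral, hence pseudo-unitary with a canonical spherical structure \cite[Propositions 8.23, 8.24]{etingof2005fusion} --- note the paper relies on the same implicit step when it applies the modular classification, so this is not a gap relative to the paper, but it deserves a sentence. Second, in forcing $\C_{ad}\cong\Rep(S_3)$ what you actually need is only that $S_3$ has trivial center (ruling out the super-Tannakian possibility $\Rep(G,z)$ with $z\neq 1$); the vanishing of the Schur multiplier is irrelevant at that step and enters, correctly, only in your later count of simples of the equivariantization.
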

\begin{proof}
 We notice that $\C$ is not symmetric since it is not integral. Keep the notations as in Theorem \ref{thm5}. Under our assumption,
$$\D_1=\langle X\rangle_{\C},\quad\D_2=\langle Y\rangle_{\C}$$
are all non-trivial fusion subcategory of $\C$. In particular, $\FPdim(\D_1)=2$ and $\FPdim(\D_2)=6$.

We first show that $\C$ is not a modular category. Suppose on the contrary that $\C$ is modular. We notices that the FP dimension of $\C$ is $12$. By \cite[Theorem 1.2 and Theorem 3.1]{Bruillard20162364}, $\C$ is equivalent to a metaplectic modular category. By the description in that paper (see the definition of a metaplectic modular category in \cite[Section 1]{Bruillard20162364}), $\C$ should have two invertible objects, two $\sqrt{3}$ -dimensional simple objects and one $2$-dimensional simple objects. This contradicts Theorem \ref{thm5} (2).

Now we suppose that $\C$ is braided. Since $\C$ is not modular, the M\"{u}ger center $\mathcal{Z}_2(\C)$ is a non-trivial symmetric fusion category. In addition,  $\mathcal{Z}_2(\C)$  is equivalent to $\D_1$ or $\D_2$ because $\D_1$ and $\D_2$ are all non-trivial fusion subcategory of $\C$. The fusion rules show that
$$X\otimes Y= Y, X\otimes Z= Z.$$
Hence we get that the fusion subcategory $\D_1$ is not equivalent to the category of super vector spaces \cite[Lemma 5.4]{muger2000galois}. This implies that the M\"{u}ger center $\mathcal{Z}_2(\C)$ must be a Tannakian subcategory. That is, $\mathcal{Z}_2(\C)$ is equivalent to the representation category $\Rep(G)$ of a finite group $G$ as a symmetric fusion category. It follows that we can form the de-equivariantization $\C_G$ of $\C$ by $\Rep(G)$. See \cite[Section 4]{drinfeld2010braided} for details on de-equivariantization.

By \cite[Remark 2.3]{etingof2011weakly}, $\C_G$ is a modular category. Combining the fact $\FPdim(\C_G)=\frac{1}{|G|}\FPdim(\C)$ with the fact $\Rep(G)= \D_1$ or $\D_2$, we get that $\FPdim(\C_G)=6$ or $2$. In both cases, $\C_G$ is a pointed fusion category \cite[Corollary 3.3]{2016DongIntegral}. It follows that $\C$ is a group-theoretical fusion category, by \cite[Theorem 7.2]{naidu2009fusion}. So $\C$ should be an integral fusion category \cite[Corollary 8.43]{etingof2005fusion}; that is, the FP dimension of any simple object of $\C$ is an integer. This is a contradiction since $\FPdim(Z)=\sqrt{6}$. This completes the proof.
\end{proof}

\section{Uniqueness of the Grothendieck ring $K(\C)$}\label{sec4}
In this section, we will use M\"{u}ger's induction functor to the Drinfeld center $\mathcal{Z}(\C)$ to show that there do not exist rank $4$ spherical fusion categories $\C$ with $K(\C)=K_{12}$.

Let $\C$ be a fusion category with $\Irr(\C)=\{\textbf{\textbf{1}}=X_0,X_1,\cdots,X_{n-1}\}$. Let $N_i$ be the matrix of left multiplication by $X_i$ in the Grothendieck ring $K(\C)$ of $\C$. The \emph{formal codegrees} of $\C$ are the eigenvalues of the matrix $N=\sum_{i=0}^{n-1}N_iN_i^*$.

Let $\mathcal{Z}(\C)$ be the Drinfeld center of a fusion category of $\C$. We use $F:\mathcal{Z}(\C)\to\C$ and $I:\C\to\mathcal{Z}(\C)$ to denote the forgetful functor and its right adjoint.

A \emph{balancing isomorphism}, or a \emph{twist}, on a braided category $\C$ is a natural automorphism $\theta:\id_{\C}\to \id_{\C}$ satisfying $\theta_1=\id_1$ and $\theta_{X\otimes Y}=(\theta_X\otimes \theta_Y)c_{Y,X}c_{X,Y}$. Let $X$ be a simple object of $\C$. Then $\theta_X$ is a scalar in $k$. In particular, any modular category is equipped with a balancing isomorphism $\theta$ such that $\theta_X$ is a root of unity for any simple object $X$ \cite{Vafa1988421}.

\begin{prop}\label{center1}
Let $\C$ be a spherical self-dual fusion category of rank $4$. Suppose that $\C$ has the Grothendieck ring $Fib\otimes \mathbb{Z}[\mathbb{Z}_2]$. Then

(1)\quad There exist non-isomorphic simple objects $\textbf{1},A,B,C$, $D_1,D_2,D_3,D_4$, $E_1,E_2,E_3,E_4$, $G_1,G_2,G_3,G_4$ in $\mathcal{Z}(\C)$ such that
\begin{align*}
&I(\textbf{1})=\textbf{1}\oplus A\oplus B\oplus C, &I(X)&= B\oplus C\oplus D_1\oplus D_2\oplus D_3\oplus D_4,\\
&I(Y)= E_1\oplus E_2\oplus E_3\oplus E_4, &I(Z)&= E_1\oplus E_2\oplus G_1\oplus G_2\oplus G_3\oplus G_4,
\end{align*}
and
\begin{align*}
&F(A)=\textbf{1},&F&(B)=\textbf{1}\oplus X, &F&(C)= \textbf{1}\oplus X,\\
&F(D_i)= X,&F&(E_1)= Y\oplus Z,&F&(E_2)= Y\oplus Z,\\
&F(E_3)= Y, &F&(E_4)= Y, &F&(G_i)= Z, i=1,2,3,4,
\end{align*}
where
\begin{align*}
&\FPdim(A)=1,&\FPdim&(B)=\frac{3+\sqrt{5}}{2}, &\FPdim&(C)= \frac{3+\sqrt{5}}{2},\\
&\FPdim(D_i)= \frac{1+\sqrt{5}}{2},&\FPdim&(E_1)= \frac{3+\sqrt{5}}{2},&\FPdim&(E_2)= \frac{3+\sqrt{5}}{2},\\
&\FPdim(E_3)= 1, &\FPdim&(E_4)= 1, &\FPdim&(G_i)= \frac{1+\sqrt{5}}{2}.
\end{align*}

(2)\quad The simple objects $\textbf{1},A,B,C$, $D_1,D_2,D_3,D_4$, $E_1,E_2,E_3,E_4$, $G_1,G_2,G_3,G_4$ are all the non-isomorphic simple objects in $\mathcal{Z}(\C)$.
\end{prop}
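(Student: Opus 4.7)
The approach hinges on the Frobenius reciprocity $[W, I(X)] = [X, F(W)]$ for $W \in \Irr(\Z(\C))$ and $X \in \Irr(\C)$, combined with the class-equation formula $F(I(X)) = \bigoplus_{Y \in \Irr(\C)} Y \otimes X \otimes Y^*$ for the underlying object of the induction. First I would apply the fusion rules of Theorem~\ref{thm5}(1) to the latter, yielding
\begin{align*}
F(I(\textbf{1})) &= 4\textbf{1} \oplus 2X, & F(I(X)) &= 2\textbf{1} \oplus 6X, \\
F(I(Y)) &= 4Y \oplus 2Z, & F(I(Z)) &= 2Y \oplus 6Z.
\end{align*}
Writing $I(X_i) = \bigoplus_W m_W^{(i)} W$ over distinct simples of $\Z(\C)$, Frobenius reciprocity gives $m_W^{(i)} = [X_i, F(W)]$, and combining with the matching identity $\sum_W m_W^{(i)} F(W) = F(I(X_i))$ yields the key sum-of-squares constraint $\sum_W (m_W^{(i)})^2 = [X_i, F(I(X_i))]$.

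For $I(\textbf{1})$ one obtains $\sum m_W^2 = 4$; since $\textbf{1}$ occurs with multiplicity one, the only admissible partition is $(1,1,1,1)$, giving $I(\textbf{1}) = \textbf{1} \oplus A \oplus B \oplus C$ with each $F(W) = \textbf{1} \oplus b_W X$ and $b_A + b_B + b_C = 2$. To pin down $(b_A, b_B, b_C) = (0, 1, 1)$ I would invoke the standard fact that $\Rep(\mathcal{U}(\C)) \hookrightarrow \Z(\C)$ is a Tannakian subcategory exhausting the invertible simples in $\ker F$: since $\mathcal{U}(\C) = \mathbb{Z}_2$ by Theorem~\ref{thm5}, only two such invertibles exist (including $\textbf{1}$), so at most one of $A, B, C$ can have $b_W = 0$. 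This forces $F(A) = \textbf{1}$ and $F(B) = F(C) = \textbf{1} \oplus X$; tensoring with $A$ then gives $A \otimes B = C$ and $A \otimes C = B$, by matching $F$-images to the already-classified simples.

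For $I(X)$, Frobenius reciprocity gives $B, C \subseteq I(X)$ with multiplicity one each, and the remaining simples $D$ satisfy $F(D) = m_D X$ with $\sum m_D^2 = 4$. The candidate partitions $(2)$ and $(1,1,1,1)$ produce identical FP-dimensional profiles, so the separation requires structural input. I would compute $B \otimes B$ and $B \otimes C$ in $K(\Z(\C))$ using Frobenius reciprocity, then apply the projection formula $I(\textbf{1}) \otimes B \cong I(F(B)) = I(\textbf{1}) \oplus I(X)$ and compare the expanded left-hand side against $I(\textbf{1}) \oplus I(X)$ term by term. The coefficient of each $D$-type simple on both sides pins down the four-simple partition $I(X) = B \oplus C \oplus D_1 \oplus D_2 \oplus D_3 \oplus D_4$ with $F(D_i) = X$. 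A parallel analysis handles $I(Y)$ and $I(Z)$ simultaneously, since simples shared between the two decompositions must carry identical $F$-images: matching the $Y$- and $Z$-multiplicities across both forces two shared simples $E_1, E_2$ with $F = Y \oplus Z$, two exclusive simples $E_3, E_4 \subseteq I(Y)$ with $F = Y$, and four exclusive simples $G_i \subseteq I(Z)$ with $F = Z$.

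For part (2), every simple $W \in \Irr(\Z(\C))$ satisfies $F(W) \neq 0$, hence appears in some $I(X_i)$ by Frobenius reciprocity; the sixteen listed simples therefore exhaust $\Irr(\Z(\C))$, confirmed by verifying that $\sum_W \FPdim(W)^2 = \FPdim(\C)^2 = 30 + 10\sqrt{5} = \FPdim(\Z(\C))$. The main obstacle is ruling out the collapsed decompositions: the single-simple-of-higher-multiplicity and the several-distinct-simples configurations satisfy all FP-dimensional and sum-of-squares identities in the same way, so their separation cannot be effected by dimension counting. The cleanest route is the Grothendieck-ring arithmetic in $\Z(\C)$ outlined above, using the projection formula and the previously determined fusion products among $\textbf{1}, A, B, C$.
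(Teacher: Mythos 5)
Your scaffolding coincides with the paper's: the computations $F(I(\textbf{1}))=4\cdot\textbf{1}\oplus 2X$, $F(I(X))=2\cdot\textbf{1}\oplus 6X$, $F(I(Y))=4Y\oplus 2Z$, $F(I(Z))=2Y\oplus 6Z$, Frobenius reciprocity, the constraint $\sum_W (m_W^{(i)})^2=[X_i,F(I(X_i))]$, and the dimension count $30+10\sqrt{5}=\FPdim(\mathcal{Z}(\C))$ for part (2) are exactly the paper's tools. Your handling of $I(\textbf{1})$ is a legitimate alternative at one point: where the paper computes the formal codegrees $5\pm\sqrt{5}$ (each twice) and invokes Ostrik's Theorem 2.13 to get $\FPdim(A)=1$, $\FPdim(B)=\FPdim(C)=\frac{3+\sqrt{5}}{2}$, you instead use that the invertibles of $\mathcal{Z}(\C)$ trivialized by $F$ form $\widehat{\mathcal{U}(\C)}\cong\mathbb{Z}_2$, which correctly rules out the partition $(2,0,0)$ of $b_A+b_B+b_C=2$ and forces $(0,1,1)$. (A minor slip there: $A\otimes B=C$ is not forced by matching $F$-images; $A\otimes B=B$ is equally consistent with the data you have at that stage.)

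The genuine gap is at the heart of the proposition: ruling out the collapsed decompositions $I(X)=B\oplus C\oplus 2D$, $I(Y)=2E$, and $I(Z)=E_1\oplus E_2\oplus 2G$. You rightly note that dimension counting cannot separate them, but the projection-formula route you gesture at does not do it either, and you never carry it out. Indeed, from $I(\textbf{1})\otimes B\cong I(F(B))=I(\textbf{1})\oplus I(X)$, removing $B$ and $A\otimes B$ from both sides leaves $B\otimes B\oplus B\otimes C$ equal to $\textbf{1}\oplus A\oplus B\oplus C\oplus 2D$ in the collapsed case and to $\textbf{1}\oplus A\oplus B\oplus C\oplus D_1\oplus D_2\oplus D_3\oplus D_4$ in the other; both sides have total FP dimension $7+3\sqrt{5}$, and every multiplicity identity obtainable from Frobenius reciprocity is satisfied in both configurations (e.g. $[D,I(X)]=[X,F(D)]=[X,2X]=2$ is perfectly consistent). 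So the two cases are indistinguishable by the $K(\mathcal{Z}(\C))$-arithmetic you propose. Tellingly, your argument never uses the spherical hypothesis, and that is precisely the input the paper needs here: by Ostrik's trace formula (Theorem 2.5 of the rank-3 paper), $\Tr(\theta_{I(X)})=0$ for every simple $X\neq\textbf{1}$, and since the twists of simple objects of the modular category $\mathcal{Z}(\C)$ are roots of unity (Vafa's theorem), the collapsed cases force $\theta_D=-\frac{1+\sqrt{5}}{4}$, $\theta_E=0$, and $|\theta_G|<1$ respectively, which are contradictions. Without this twist argument, or some comparable input beyond the Grothendieck ring, your proof does not close.
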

\begin{proof}
As in Theorem \ref{thm5}, we write $K(\C)=Fib\otimes \mathbb{Z}[\mathbb{Z}_2]=\{\textbf{1},X,Y,Z\}$.

(1)\quad Since $K(\C)$ is commutative, it has four $1$-dimensional irreducible representations. It follows from \cite[Theorem 2.13]{2013ostrikpivotal} that the object $I(\textbf{1})$ is a sum of $4$ simple objects and every object has multiplicity $1$ by \cite[Example 2.18]{2013ostrikpivotal} . It is well knows that $I(\textbf{1})$ is an algebra in $\mathcal{Z}(\C)$, and hence $\textbf{1}$ is a summand of $I(\textbf{1})$. So we can write
$$I(\textbf{1})=\textbf{1}\oplus A\oplus B\oplus C.$$

The formal codegrees of $\C$ are $5+\sqrt{5},5+\sqrt{5},5-\sqrt{5},5-\sqrt{5}$. Again by \cite[Theorem 2.13]{2013ostrikpivotal}, we have
$$\FPdim(A)=1,\FPdim(B)=\FPdim(C)=\frac{3+\sqrt{5}}{2}.$$

By \cite[Theorem 2.5]{2013ostrikpivotal}, we have
\begin{equation*}
\begin{split}
5+\sqrt{5}&=\FPdim(\C)=Tr(\theta_{I(\textbf{1})})\\
&=1+\FPdim(A)\theta_A+\FPdim(B)\theta_B+ \FPdim(C)\theta_C\\
&=1+\theta_A+\frac{3+\sqrt{5}}{2}\theta_B+ \frac{3+\sqrt{5}}{2}\theta_C.
\end{split}
\end{equation*}

Because $\theta_V$ is a root of unity for any simple object $V$ in $\mathcal{Z}(\C)$, we must have $\theta_A=\theta_B=\theta_C=1$.

By \cite[Proposition 5.4]{etingof2005fusion}, we have
\begin{equation*}
\begin{split}
F(I(\textbf{1}))&= F(\textbf{1})\oplus F(A)\oplus F(B)\oplus F(C)\\
&=\bigoplus_{T\in\Irr(\C)}T\otimes\textbf{1}\otimes T^*\\
&=4\cdot\textbf{1}\oplus 2X.
\end{split}
\end{equation*}
So we must have
$$F(A)=\textbf{1},F(B)=\textbf{1}\oplus X, F(C)= \textbf{1}\oplus X.$$

\medbreak
Again by \cite[Proposition 5.4]{etingof2005fusion}, we have
$$F(I(X))= \bigoplus_{T\in\Irr(\C)}T\otimes X\otimes T^*=2\cdot\textbf{1}\oplus 6X.$$

Since $I$ and $F$ are adjoint, we have
\begin{equation*}
\begin{split}
\dim\Hom(I(1),I(X))&=\dim\Hom(F(I(1)),X)=2,\\
\dim\Hom(B,I(X))&=\dim\Hom(F(B),X)=1,\\
\dim\Hom(C,I(X))&=\dim\Hom(F(C),X)=1,\\
\dim\Hom(I(X),I(X))&=\dim\Hom(F(I(X)),X)=6.
\end{split}
\end{equation*}

The first equality implies that $I(1)$ and $I(X)$ have $2$ common simple objects; the second and the third equalities imply that the multiplicities of $B$ and $C$ in $I(X)$ are $1$. So we may write $I(X)=B\oplus C\oplus W$ for some objects $W$. The last equality implies we have two possibilities:
$$I(X)=B\oplus C\oplus 2D\, \mbox{\quad or\quad} I(X)=B\oplus C\oplus D_1\oplus D_2\oplus D_3\oplus D_4,$$
where $D,D_1, D_2, D_3,  D_4$ are simple objects of $\mathcal{Z}(\C)$ and the last four are non-isomorphic.

If $I(X)=B\oplus C\oplus 2D$ then
$$2\cdot \textbf{1}\oplus 2X\oplus 2F(D)= F(I(X))=2\cdot \textbf{1}\oplus 6X.$$
This implies that $F(D)=2X$, and hence $\FPdim(D)=1+\sqrt{5}$. By \cite[Theorem 2.5]{2013ostrikpivotal}, we have
$$0=Tr(\theta_{I(X)})=\FPdim(B)\theta_B+ \FPdim(C)\theta_C+2\FPdim(D)\theta_D.$$
This implies that $\theta_D=-\frac{1+\sqrt{5}}{4}$, which is a contradiction since $\theta_D$ must be a root of unity. Therefore, $I(X)=B\oplus C\oplus D_1\oplus D_2\oplus D_3\oplus D_4$, and hence $F(D_i)=X$, $\FPdim(D_i)=\frac{1+\sqrt{5}}{2}$.

\medbreak
By \cite[Proposition 5.4]{etingof2005fusion}, we have
$$F(I(Y))= \bigoplus_{T\in\Irr(\C)}T\otimes Y\otimes T^*=4Y\oplus 2Z\mbox{\quad and\quad}$$

\begin{equation*}
\begin{split}
\dim\Hom(I(1),I(Y))&=\dim\Hom(F(I(1)),Y)=0,\\
\dim\Hom(I(X),I(Y))&=\dim\Hom(F(I(X)),Y)=0,\\
\dim\Hom(I(Y),I(Y))&=\dim\Hom(F(I(Y)),Y)=4.
\end{split}
\end{equation*}

So we can write
$$I(Y)=2E \mbox{\quad or\quad} I(Y)=E_1\oplus E_2\oplus E_3\oplus E_4,$$
where $E_1, E_2, E_3,  E_4$ are non-isomorphic simple objects of $\mathcal{Z}(\C)$.

If $I(X)=2E$ then
$$2F(E)= F(I(Y))= 4Y\oplus 2Z.$$
This implies that $F(E)=2Y\oplus Z$, and hence $\FPdim(E)=\frac{5+\sqrt{5}}{2}$. By \cite[Theorem 2.5]{2013ostrikpivotal}, we have
$$0=Tr(\theta_{I(Y)})=2\FPdim(E)\theta_E.$$
This implies that $\theta_E=0$, which is a contradiction since $\theta_D$ is a root of unity. Therefore, $I(Y)=E_1\oplus E_2\oplus E_3\oplus E_4$. We hence have
$$4Y\oplus 2Z= F(I(Y))= F(E_1)\oplus F(E_2)\oplus F(E_3)\oplus F(E_4).$$
From $\dim\Hom(E_i,I(Y))=\dim\Hom(F(E_i),Y)=1$, we may write
\begin{equation}\label{F(Ei)}
\begin{split}
F(E_i)=Y\oplus a_iZ, \mbox{\,where\,\,} a_1+a_2+a_3+a_4=2.
\end{split}
\end{equation}

\medbreak
By \cite[Proposition 5.4]{etingof2005fusion}, we have
$$F(I(Z))= \bigoplus_{T\in\Irr(\C)}T\otimes Z\otimes T^*=2Y\oplus 6Z \mbox{\quad and\quad}$$

\begin{equation*}
\begin{split}
\dim\Hom(I(1),I(Z))&=\dim\Hom(F(I(1)),Z)=0,\\
\dim\Hom(I(X),I(Z))&=\dim\Hom(F(I(X)),Z)=0,\\
\dim\Hom(I(Y),I(Z))&=\dim\Hom(F(I(Y)),Z)=2,\\
\dim\Hom(I(Z),I(Z))&=\dim\Hom(F(I(Z)),Z)=6.
\end{split}
\end{equation*}
So we know that the decomposition of $I(Y)$ and $I(Z)$ have two simple objects in common, say $E_1$ and $E_2$, hence we can write
$$I(Z)=E_1\oplus E_2\oplus 2G\, \mbox{\quad or\quad} I(Z)=E_1\oplus E_2\oplus G_1\oplus G_2\oplus G_3\oplus G_4,$$
where $G_1, G_2, G_3,  G_4$ are non-isomorphic simple objects of $\mathcal{Z}(\C)$.

In both cases, we have $\dim\Hom(E_i,I(Z))=\dim\Hom(F(E_i),Z)=1$ for $i=1,2$. Together with equation (\ref{F(Ei)}), we have
$$F(E_1)= F(E_2)= Y\oplus Z,\,  F(E_3)= F(E_4)= Y.$$

If $I(Z)=E_1\oplus E_2\oplus 2G$ then
$$F(E_1)\oplus F(E_2)\oplus 2F(G)= F(I(Z))= 2Y\oplus 6Z.$$
This implies that $F(G)=2Z$, and hence $\FPdim(G)=1+\sqrt{5}$. By \cite[Theorem 2.5]{2013ostrikpivotal}, we have
$$0=Tr(\theta_{I(Z)})=\FPdim(E_1)\theta_{E_1}+ \FPdim(E_2)\theta_{E_1}+2\FPdim(G)\theta_G.$$
This implies that $\theta_G=-\frac{1+\sqrt{5}}{8}(\theta_{E_1}+\theta_{E_2})$. Taking the absolute value of both sides, we get $|\theta_G|<\frac{1}{2}\cdot|\theta_{E_1}+\theta_{E_2}|<1$, which is a contradiction since $\theta_G$ is a root of unity. Therefore, $I(Z)=E_1\oplus E_2\oplus G_1\oplus G_2\oplus G_3\oplus G_4$. So we have
$$F(E_1)\oplus F(E_2)\oplus F(G_1)\oplus F(G_2)\oplus F(G_3)\oplus F(G_4)= F(I(Z))= 2Y\oplus 6Z.$$
This implies that $F(G_i)= Z$, and hence $\FPdim(G_i)=\frac{1+\sqrt{5}}{2}$ for $i=1,2,3,4$.

\medbreak
(2)\quad This part follows from the fact
\begin{equation*}
\begin{split}
&1+\FPdim(A)^2+\FPdim(B)^2+\FPdim(C)^2+\\
&\sum_{i=1}^4\FPdim(D_i)^2+\sum_{i=1}^4\FPdim(E_i)^2+\sum_{i=1}^4\FPdim(G_i)^2\\
&=30+10\sqrt{5}=\FPdim(\C)^2=\FPdim(\mathcal{Z}(\C)).
\end{split}
\end{equation*}
\end{proof}

\begin{prop}\label{sphfusion2}
There do not exist rank $4$ spherical self-dual fusion categories $\C$ with $K(\C)=K_{12}$. That is, the fusion ring $K_{12}$ is not categorifiable.
\end{prop}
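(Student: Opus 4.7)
My plan is to mirror the induction-functor analysis used in Proposition \ref{center1} and drive a numerical contradiction at the object $I(X)$. Assume for contradiction that $\C$ is a spherical self-dual fusion category of rank $4$ with $K(\C)=K_{12}$; write $\Irr(\C)=\{\textbf{1},X,Y,Z\}$ as in Theorem \ref{thm5}(2), so $\FPdim(\C)=12$.

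First I would decompose $I(\textbf{1})$. The ring $K_{12}$ is commutative, and solving the fusion relations for ring homomorphisms $\chi\colon K(\C)\otimes\mathbb{C}\to\mathbb{C}$ yields exactly four characters, with value vectors
$$(1,1,2,\sqrt{6}),\quad (1,1,2,-\sqrt{6}),\quad (1,1,-1,0),\quad (1,-1,0,0).$$
The associated formal codegrees $f_\chi=\sum_i|\chi(X_i)|^2$ are $12,12,3,2$ (and $\sum f_\chi^{-1}=1$ as a sanity check). By \cite[Theorem 2.13]{2013ostrikpivotal} together with \cite[Example 2.18]{2013ostrikpivotal} this gives
$$I(\textbf{1})=\textbf{1}\oplus A\oplus B\oplus C,\qquad \FPdim(A)=1,\ \FPdim(B)=4,\ \FPdim(C)=6,$$
multiplicity-free.

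Next I would pin down the twists and the forgetful images. By the trace formula \cite[Theorem 2.5]{2013ostrikpivotal}, $\Tr(\theta_{I(\textbf{1})})=\FPdim(\C)=12$, that is $1+\theta_A+4\theta_B+6\theta_C=12$. Since $\C$ is spherical, $\mathcal{Z}(\C)$ is modular, so each $\theta_V$ is a root of unity; the equality case of the triangle inequality then forces $\theta_A=\theta_B=\theta_C=1$. In parallel, computing
$$F(I(\textbf{1}))=\bigoplus_{T\in\Irr(\C)}T\otimes T^*=4\textbf{1}\oplus 2X\oplus 3Y,$$
and invoking $[V,I(\textbf{1})]=[F(V),\textbf{1}]=1$ for each of $A,B,C$ together with the fact that each $\FPdim F(V)$ is an integer (so no $Z$-summand can appear, since $\FPdim(Z)=\sqrt{6}$), a short count yields the unique solution $F(A)=\textbf{1}$, $F(B)=\textbf{1}\oplus X\oplus Y$, $F(C)=\textbf{1}\oplus X\oplus 2Y$.

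The contradiction then appears at $I(X)$. The computation $F(I(X))=\bigoplus_T T\otimes X\otimes T^*=2\textbf{1}\oplus 4X\oplus 3Y$ and adjointness give $[\textbf{1},I(X)]=[A,I(X)]=0$ and $[B,I(X)]=[C,I(X)]=1$, so $I(X)=B\oplus C\oplus W$ with $W$ supported on simples of $\mathcal{Z}(\C)$ outside $\{\textbf{1},A,B,C\}$ and $\FPdim(W)=12-4-6=2$. Applying the trace formula at $I(X)$ yields
$$0=\Tr(\theta_{I(X)})=4\theta_B+6\theta_C+\Tr(\theta_W)=10+\Tr(\theta_W),$$
so $\Tr(\theta_W)=-10$, contradicting $|\Tr(\theta_W)|\le\FPdim(W)=2$. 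The essential step is the rigidity argument that pins $\theta_B=\theta_C=1$; once those are locked, the large positive contribution of $B$ and $C$ to $\Tr(\theta_{I(X)})$ cannot be absorbed by the tiny remainder $W$, and the contradiction is immediate.
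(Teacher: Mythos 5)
Your proposal is correct and follows essentially the same route as the paper: decompose $I(\textbf{1})$ via the formal codegrees $12,12,3,2$, force $\theta_A=\theta_B=\theta_C=1$ from $\Tr(\theta_{I(\textbf{1})})=12$, determine $F(B)$ and $F(C)$, and then derive the contradiction from $\Tr(\theta_{I(X)})=0$. The only (harmless) difference is at the last step, where the paper explicitly decomposes the remainder as $D_1\oplus D_2$ with $F(D_i)=X$ and concludes $\theta_{D_1}+\theta_{D_2}=-10$, while you bound $|\Tr(\theta_W)|\le\FPdim(W)=2$ directly without decomposing $W$; both yield the same contradiction.
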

\begin{proof}
Let $\C$ be a spherical self-dual fusion category of rank $4$. Assume that $K(\C)=K_{12}$. We will follow the line of proof of Proposition \ref{center1} to find a contradiction. As in Theorem \ref{thm5}, we write $K(\C)=\{\textbf{1},X,Y,Z\}$.

The object $I(\textbf{1})$ can be decomposed as follows:
$$I(\textbf{1})=\textbf{1}\oplus A_1\oplus A_2\oplus A_3,$$
where $A_1,A_2,A_3$ are non-isomorphic simple objects in $\mathcal{Z}(\C)$.

The formal codegrees of $\C$ are $12,12,3$ and $2$. By \cite[Theorem 2.13]{2013ostrikpivotal}, we have
$$\FPdim(A_1)=1,\FPdim(A_2)=4,\FPdim(A_3)=6.$$

By \cite[Theorem 2.5]{2013ostrikpivotal}, we have
$$\FPdim(\C)=Tr(\theta_{I(\textbf{1})})=1+\FPdim(A_1)\theta_{A_1}+\FPdim(A_2)\theta_{A_2}+ \FPdim(A_3)\theta_{A_3}.$$
Because $\theta_V$ is a root of unity for any simple object $V$ in $\mathcal{Z}(\C)$, we have $\theta_{A_1}=\theta_{A_2}=\theta_{A_3}=1$.

By \cite[Proposition 5.4]{etingof2005fusion}, we have
\begin{equation}\label{equation1}
\begin{split}
F(I(\textbf{1}))&= F(\textbf{1})\oplus F(A_1)\oplus F(A_2)\oplus F(A_3)\\
&=\bigoplus_{T\in\Irr(\C)}T\otimes\textbf{1}\otimes T^*\\
&=4\cdot\textbf{1}\oplus 2X\oplus3Y.
\end{split}
\end{equation}

Let $F(A_i)=\textbf{1}\oplus a_iX\oplus b_iY$. Counting FP dimensions on both sides, we get
$$a_1=b_1=0,a_2+2b_2=3,a_3+2b_3=5.$$

Together wit equation (\ref{equation1}), we also get $a_2+a_3=2,b_2+b_3=3$. So $a_2=a_3=1,b_2=1,b_3=2$, and hence $F(A_1)=\textbf{1}$, $F(A_2)=\textbf{1}\oplus X\oplus Y$, $F(A_3)= \textbf{1}\oplus X\oplus 2Y$.

By \cite[Proposition 5.4]{etingof2005fusion}, we have
$$F(I(X))= \bigoplus_{T\in\Irr(\C)}T\otimes X\otimes T^*=2\cdot\textbf{1}\oplus 4X\oplus 3Y.$$

Since $I$ and $F$ are adjoint, we have
\begin{equation*}
\begin{split}
\dim\Hom(I(1),I(X))&=\dim\Hom(F(I(1)),X)=2,\\
\dim\Hom(I(X),I(X))&=\dim\Hom(F(I(X)),X)=4,\\
\dim\Hom(A_2,I(X))&=\dim\Hom(F(B),X)=1,\\
\dim\Hom(A_3,I(X))&=\dim\Hom(F(C),X)=1.
\end{split}
\end{equation*}

So we may write
$$I(X)= A_2\oplus A_3\oplus D_1\oplus D_2,$$
where $D_1, D_2$ are non-isomorphic simple objects of $\mathcal{Z}(\C)$. Hence we have
\begin{equation}\label{equation2}
\begin{split}
F(I(X))&= F(A_2)\oplus F(A_3)\oplus F(D_1)\oplus F(D_2)\\
&= 2\cdot\textbf{1}\oplus 4X\oplus 3Y.
\end{split}
\end{equation}
This implies that $F(D_1)= F(D_2)= X$. Hence $\FPdim(D_1)=\FPdim(D_1)=1$.

By \cite[Theorem 2.5]{2013ostrikpivotal}, we have
\begin{equation*}
\begin{split}
0=Tr(\theta_{I(X)})=\sum_{i=2}^3\FPdim(A_i)\theta_{A_i}+\sum_{i=1}^2\FPdim(D_i)\theta_{D_i}.
\end{split}
\end{equation*}

This implies that $\theta_{D_1}+ \theta_{D_2}=-10$, which is a contradiction since $\theta_{D_1}$ and $\theta_{D_2}$ are roots of unity. This proves the proposition.
\end{proof}

\medbreak
\begin{thm}\label{main}
Let $\C$ be a spherical self-dual fusion category of rank $4$ with non-trivial universal grading. Then $K(\C)\cong Fib\otimes\mathbb{Z}[\mathbb{Z}_2]$.
\end{thm}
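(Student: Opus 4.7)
My plan is to apply the results of Sections \ref{sec3} and \ref{sec4} in sequence. Since the paper has already reduced to the non-pointed setting prior to Theorem \ref{thm5}, I will invoke that theorem first: it classifies the Grothendieck ring of any self-dual rank-$4$ fusion category with non-trivial universal grading into exactly two possibilities, namely $K(\C)\cong Fib\otimes\mathbb{Z}[\mathbb{Z}_2]$ (case (1)) or $K(\C)\cong K_{12}$ (case (2)). So Theorem \ref{main} reduces to ruling out the second possibility under the additional spherical hypothesis.

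The exclusion of $K_{12}$ is precisely the content of Proposition \ref{sphfusion2}: no spherical self-dual rank-$4$ fusion category has Grothendieck ring $K_{12}$. Applying this result to our $\C$ eliminates case (2), so the only remaining possibility is $K(\C)\cong Fib\otimes\mathbb{Z}[\mathbb{Z}_2]$, which is the desired conclusion.

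The main obstacle is internal to Proposition \ref{sphfusion2}; once that is granted, Theorem \ref{main} is a one-line corollary of Theorem \ref{thm5}. In more detail, the hard step inside Proposition \ref{sphfusion2} is to compute the images of simple objects under $I:\C\to\mathcal{Z}(\C)$. The formal codegrees of $K_{12}$ (namely $12,12,3,2$) together with \cite[Theorem 2.13]{2013ostrikpivotal} fix the FP dimensions of the simple summands of $I(\textbf{1})$, while $\FPdim(\C)=\Tr(\theta_{I(\textbf{1})})$ combined with the root-of-unity constraint on twists of simple objects of $\mathcal{Z}(\C)$ forces the corresponding twists to equal $1$. Then M\"uger's formula $F(I(X))=\bigoplus_{T\in\Irr(\C)}T\otimes X\otimes T^{*}$ together with the adjunction $\Hom(V,I(W))\cong\Hom(F(V),W)$ pins down $I(X)=A_2\oplus A_3\oplus D_1\oplus D_2$ with $\FPdim(D_1)=\FPdim(D_2)=1$, and finally $\Tr(\theta_{I(X)})=0$ gives $\theta_{D_1}+\theta_{D_2}=-10$, contradicting $|\theta_{D_i}|=1$. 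This root-of-unity obstruction is the crux of the whole argument; I do not see a way around it other than through the center.
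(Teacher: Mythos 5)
Your proposal is correct and follows exactly the paper's own route: Theorem \ref{thm5} narrows $K(\C)$ to $Fib\otimes\mathbb{Z}[\mathbb{Z}_2]$ or $K_{12}$, and Proposition \ref{sphfusion2} (whose internal argument via formal codegrees, the adjunction $\Hom(V,I(W))\cong\Hom(F(V),W)$, and the twist contradiction $\theta_{D_1}+\theta_{D_2}=-10$ you summarize accurately) eliminates $K_{12}$. No gaps; this matches the paper's one-line deduction.
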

\begin{proof}
By Theorem \ref{thm5}, $K(\C)\cong Fib\otimes\mathbb{Z}[\mathbb{Z}_2]$ or $K_{12}$, while Proposition \ref{sphfusion2} shows that $K(\C)\ncong K_{12}$, hence $K(\C)\cong Fib\otimes\mathbb{Z}[\mathbb{Z}_2]$.
\end{proof}

\begin{cor}
Let $\C$ be a spherical self-dual braided fusion category of rank $4$ with non-trivial universal grading. Then $\C\cong \Fib\boxtimes\vect_{\mathbb{Z}_2}^{\omega}$ as fusion categories.
\end{cor}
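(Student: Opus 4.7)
The plan is to exploit the fact that $\C$ contains a non-degenerate braided subcategory equivalent to $\Fib$, and then apply M\"uger's decomposition theorem. By Theorem \ref{main} we have $K(\C)\cong Fib\otimes\mathbb{Z}[\mathbb{Z}_2]$, so $\C$ has the fusion rules listed in Theorem \ref{thm5}(1); in particular $\C$ has a non-invertible simple object $X$ with $X\otimes X=\textbf{1}\oplus X$ and an invertible object $Y$ of order $2$. I would work with the two fusion subcategories $\D_1=\langle X\rangle_{\C}$ and $\D_2=\langle Y\rangle_{\C}$.

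First I would identify $\D_1$. Since $X$ obeys the Fibonacci fusion rule, $\D_1$ is a non-pointed rank-$2$ fusion category, and Ostrik's classification of rank-$2$ categories forces $\D_1\simeq\Fib$ as a fusion category. The braiding of $\C$ restricts to a braiding on $\D_1$, and any braiding on $\Fib$ is non-degenerate (this is the modularity of non-pointed rank-$2$ categories invoked in the proof of Proposition \ref{prop2}). Hence $\D_1$ is a non-degenerate braided fusion subcategory of $\C$.

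Next I would invoke M\"uger's decomposition theorem: whenever $\D$ is a non-degenerate braided fusion subcategory of a braided fusion category $\C$, the tensor product functor induces a braided equivalence $\D\boxtimes\D'\simeq\C$, where $\D'$ is the M\"uger centralizer of $\D$ in $\C$. Applied to $\D_1$ this gives $\C\simeq\D_1\boxtimes\D_1'$ with
\[
\FPdim(\D_1')=\frac{\FPdim(\C)}{\FPdim(\D_1)}=\frac{5+\sqrt{5}}{(5+\sqrt{5})/2}=2.
\]
Listing the FP-dimensions $1,\tfrac{1+\sqrt{5}}{2},1,\tfrac{1+\sqrt{5}}{2}$ of $\textbf{1},X,Y,Z$, the only set of simples whose squared FP-dimensions sum to $2$ is $\{\textbf{1},Y\}$; so $\D_1'=\D_2$, which is pointed of rank $2$ and hence equivalent to $\vect_{\mathbb{Z}_2}^{\omega}$ for some $\omega\in H^3(\mathbb{Z}_2,k^*)$. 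Combining these yields $\C\simeq\Fib\boxtimes\vect_{\mathbb{Z}_2}^{\omega}$ as (braided) fusion categories.

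The key point is recognising that the inherited braiding on $\langle X\rangle_{\C}$ is automatically non-degenerate; I do not expect this to be a real obstacle, since Ostrik's analysis of rank-$2$ categories already rules out degenerate braidings on Fibonacci. Once this is in hand, M\"uger's theorem collapses the problem to a short FP-dimension bookkeeping step dictated by Theorem \ref{thm5}(1).
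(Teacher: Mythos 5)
Your proposal is correct and follows essentially the same route as the paper: identify $\langle X\rangle_{\C}\simeq\Fib$ as a modular subcategory via Ostrik's rank-$2$ classification, apply M\"uger's decomposition theorem to get $\C\simeq\Fib\boxtimes\Fib'$, and conclude that the centralizer is pointed of rank $2$. The only cosmetic difference is that you cite Theorem \ref{main} where the paper combines Theorem \ref{thm5} with Corollary \ref{fusionrule2}, and you spell out the FP-dimension count identifying $\D_1'=\D_2$ that the paper dismisses as obvious.
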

\begin{proof}
Since $\C$ is braided, Theorem \ref{thm5} and Corollary \ref{fusionrule2} show that $K(\C)\cong Fib\otimes \mathbb{Z}(\mathbb{Z}_2)$. Keep the notations as in Theorem \ref{thm5}. Let $\D$ be the fusion subcategory of $\C$ generated by $X$. Then $\D$ is of rank $2$ and not pointed, hence $\D\cong\Fib$ is a modular category \cite[Corollary 2.3]{ostrik2003fusion}. By M\"{u}ger's Theorem \cite[Theorem 4.2]{muger2003structure}, $\C$ is equivalent to $\Fib\boxtimes \Fib'$, where $\Fib'$ is the M\"{u}ger centralizer of $\Fib$. Obviously, $\Fib'$ is pointed and hence $\Fib'\cong \vect_{\mathbb{Z}_2}^{\omega}$ for some $\omega\in H^3(\mathbb{Z}_2,k^*)$. This proves the corollary.
\end{proof}

\acknowledgements{\rm The authors appreciate the two referees for their suggestions and comments. The first author is happy to thank Henry Tucker for useful discussion. }



\end{document}